\theoremstyle{definition}
\newtheorem{defn}{Definition}[section]
\newtheorem{theorem}{Theorem}[section]
\title{\textbf{Predator Prey Interaction Model with Hunting Cooperation among Predators and Allee Effect in Prey}}
\author{\text{Aaditya Kharel*,  Michelle McCullum, Nick Burks}\\ 
Advisors: Zhifu Xie*, Huiqing Zhu*\\
*Email: \href{}{aaditya.kharel@usm.edu}, zhifu.xie@usm.edu, huiqing.zhu@usm.edu\\
School of Mathematics and Natural Sciences \\
The University of Southern Mississippi\\
Hattiesburg, Mississippi 39406\\
}
\date{}
\begin{document}
\setlength{\parindent}{1cm}


%
%

\maketitle

%
%

\begin{abstract}
This paper investigates a dynamical predator-prey interaction model that incorporates: (a) hunting cooperation ($\alpha$) among predators; (b) Allee effect in prey. We show all possible boundary and interior solutions. In order to analyze the stability of the solution, we make use of the Jacobian matrix and the resultant characteristic polynomial. Particularly, the sign of the eigenvalue is used to determine stability of a solution. We then provide a proof for stability of interior solution. Finally, we verify our results numerically in MALTAB by plotting: (1) predator-prey intersection graphs; (2) prey-predator vs hunting cooperation graphs; (3) initial condition trajectory for equilibrium solution. It is interesting to notice that the hunting cooperation can switch the stability of coexistence equilibrium solutions. Through numerical simulations, it was verified that increasing the hunting cooperation could lead to extinction of both prey and predator population for $\alpha$$>$$0.96$, given our choice of parameters.  \\

\textbf{Keywords:} Predator-Prey Interaction, Hunting cooperation, Allee effect, Bifurcation, Equilibrium Coexistence, Stability Analysis

\end{abstract}

%
%

\section{Introduction and Model Description}

It is important that the predators and prey in any ecosystem coexist such that the presence of either of the species is not detrimental to their coexistence. Rather, the presence of both predator and prey species should provide a check and balance mechanism for each species to coexist and maintain their population in the ecosystem towards stable equilibrium over time. Some examples of such predator-prey interactions that help maintain harmony in the ecosystem are wolf-rabbit, tiger-deer, lion-zebra and so on. The extinction of a species, whether predator or prey, is undesirable to any ecosystem and thus can create imbalance in population dynamics in the long run.

Many factors such as Allee effects, dispersal movements of both prey and predator (diffusive model), hunting cooperation among predator species, function response between prey and predator (consumption of prey per predator) and among many others, play a significant role in determining the population dynamics of predator and prey to create stable coexistence equilibrium. In particular, hunting cooperation and Allee effect phenomena have separately been investigated under various constraints. Allee effect in prey is said to occur if the population of prey crosses a given population threshold (known as Allee threshold), which then leads to a stable equilibrium coexistence. If the prey population is below Allee threshold, it leads to extinction of the prey population.

In Rao and Kang's paper, they propose a reaction-diffusion predator prey model with Allee effects in prey and diffusion in both prey and predator to investigate how Allee effect and diffusion of species affect the spatial-temporal dynamics (Rao and Kang 2015 \cite{RK}).  On the other hand, Alves and Hilker discovered that hunting cooperation among predators can induce Allee effects (Alves and Hilker 2017\cite{AH}). Then the aim of our research is to study the effects of incorporating hunting cooperation term into the non-diffusive version of Rao and Kang's model to see if there is still an Allee effect phenomena when hunting cooperation term is added to their model.

As a result, we propose a dynamical system of predator-prey interaction model consisting of two ordinary differential equations (ODEs) that incorporates hunting cooperation among the predators into Rao and Kang's model. Our proposed model is:

\begin{equation}\label{eq1}
\begin{split}
\frac{du}{dt} &=au\big(u-b\big)\big(1-u\big)-\frac{uv\big(1+\alpha v\big)}{v+\big(1+\alpha v\big)u}, \\
\frac{dv}{dt} &=\frac{cuv\big(1+\alpha v \big)}{v+\big(1+\alpha v \big)u}-dv, 
\end{split}
\end{equation}
where $u$ is the Prey Population,
$v$ is the Predator Population,
$a$ is the intrinsic growth rate of the Predator,
$b$ is the Allee threshold and $b \in (-1,1),$
$c$ is the energy conversion coefficient,
$d$ is the death rate of the Prey and
$\alpha$ is the hunting cooperation.

We particularly want to investigate if hunting cooperation causes any Allee effect phenomena in prey or predator. This is particularly important because if Allee effect is present in a two-species interaction model, we can predict that prey population below Allee threshold will not survive at all in any ecosystem due to Allee effect.

\section {Equilibrium Solution}
\begin{defn}
An equilibrium solution in the context of prey-predator interaction can be defined as an ordered pair of prey and predator population, which mathematically is a  solution to the system of ordinary differential equation, such that there is no rate of change of prey and predator population. \end{defn}
\begin{defn}
A boundary solution is a solution that lies either on the positive x-axis or the positive y-axis. 
\end{defn} 
\begin{defn}
An interior solution is a solution that lies in the first quadrant excluding x-axis and y-axis.
\end{defn} 
\begin{theorem}
Assume all parameters in the model \eqref{eq1} are positive. The system always has three boundary equilibrium solutions: $(0,0), (1,0)$ and $(b,0)$. The system may have none, one  or two interior equilibrium solutions which  depends on the parameters. 
\end{theorem}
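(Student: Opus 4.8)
The plan is to set both right-hand sides of \eqref{eq1} to zero and classify the solutions by whether they are boundary or interior. First I would dispose of the boundary equilibria. Setting $v=0$ makes the predator equation vanish identically, while the prey equation reduces to $au(u-b)(1-u)=0$, whose roots $u=0,b,1$ give exactly the three claimed points $(0,0)$, $(b,0)$, $(1,0)$. To check that nothing else sits on the $v$-axis, I would put $u=0$ in \eqref{eq1}; the predator equation collapses to $-dv=0$, forcing $v=0$, so the $v$-axis contributes only the origin.

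Next, for interior equilibria I assume $u>0$ and $v>0$, which lets me cancel $u$ and $v$. Dividing the predator equation by $v$ and solving yields the predator nullcline $v=\phi(u):=\dfrac{(c-d)u}{\,d-(c-d)\alpha u\,}$, which is positive only when $c>d$ (so $c>d$ is a necessary condition for coexistence) and $u<\dfrac{d}{(c-d)\alpha}$; on that range $\phi$ increases from $0$ toward a vertical asymptote. Substituting the balance $\frac{1+\alpha v}{v+(1+\alpha v)u}=\frac{d}{cu}$ coming from that same equation into the divided prey equation collapses the functional-response term and leaves the prey relation $v=g(u):=\dfrac{ac}{d}\,u(u-b)(1-u)$. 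On the biologically relevant interval this is a single-humped cubic, vanishing at the endpoints and positive in between (on $(b,1)$ when $0<b<1$, and on $(0,1)$ when $b\le 0$), which is precisely the region where $v>0$.

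The interior equilibria are then the solutions of $g(u)=\phi(u)$. Clearing the denominator turns this into the single cubic
\begin{equation}\label{eqcubic}
H(u):=ac(u-b)(1-u)\big[d-(c-d)\alpha u\big]-d(c-d)=0,
\end{equation}
and each admissible root (one lying in the hump interval with $\phi(u)>0$) produces exactly one interior point $(u,g(u))$. To bound the count I would argue geometrically rather than through the cubic discriminant: $\phi$ is strictly increasing while $g$ rises then falls, and $g-\phi$ is strictly negative at both endpoints of the hump interval (there $g=0$ while $\phi>0$); a rising-then-falling function minus a strictly increasing one changes sign at most twice, giving at most two admissible roots. Whether that number is $0$, $1$, or $2$ is then decided by the maximal height of the hump relative to $\phi$—no crossing, a tangency, or two transversal crossings—which is exactly what moves with $a,b,c,d,\alpha$.

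The main obstacle is making this ``at most two crossings'' step airtight, since $H$ is a genuine cubic once $\alpha>0$ and a purely algebraic count would have to exclude a spurious third root. The clean bound really rests on the monotonicity of $\phi$ together with the unimodality of $g$ on the correct subinterval, so I expect the delicate points to be verifying that $g$ is genuinely single-humped there, pinning down the endpoint signs of $g-\phi$ separately in the cases $b\le 0$ and $0<b<1$, and confirming that admissible roots stay below the asymptote $u=d/((c-d)\alpha)$ so that $\phi(u)>0$ holds automatically.
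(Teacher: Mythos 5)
Your reduction is essentially the paper's: the same two nullclines $v=\phi(u)=\frac{(c-d)u}{\,d-(c-d)\alpha u\,}$ and $v=\frac{ac}{d}u(u-b)(1-u)$, combined into a single cubic in $u$ (your $H(u)=0$ is in fact the \emph{corrected} form of the paper's displayed equation, which misprints $\frac{ac}{d}(u-b)$ as $\frac{ac}{d}u-b$), and the same enumeration of the axis equilibria. One small caution on the boundary part: at $(0,0)$ the functional response is a $0/0$ form, so the origin is an equilibrium only after extending the vector field by continuity along the axes; the paper explicitly flags this and retreats to numerics there, and you should at least acknowledge it rather than treat $u=0$ as an ordinary root of $au(u-b)(1-u)=0$ when $v=0$.

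Where you genuinely depart from the paper is the count of interior solutions. The paper offers no proof at all --- it reports that \texttt{fsolve} returns zero, one, or two roots for the parameter values tried --- whereas you attempt to establish the bound analytically, which is a real improvement. However, the lemma you lean on, that a rising-then-falling function minus a strictly increasing one changes sign at most twice, is false in general: on the rising part of the hump $g-\phi$ need not be monotone, so nothing a priori excludes four transversal crossings. You correctly identified this as the weak point; the clean repair is algebraic rather than geometric. For $c>d$ (which you rightly note is necessary for $\phi>0$) and $\alpha>0$, $H$ is a cubic with positive leading coefficient $ac(c-d)\alpha$ and $H(1)=-d(c-d)<0$; factoring $H$ over its real roots shows that a cubic with positive leading coefficient is negative at $u=1$ only if at most two of its roots lie in $(-\infty,1)$, and every admissible interior equilibrium must have $u<1$ (else $v=g(u)\le 0$). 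This gives ``at most two'' in all cases of $b$ without any endpoint case analysis. With that substitution your argument is complete and strictly stronger than the paper's, which establishes the trichotomy only by numerical experiment; realizing each of the counts $0$, $1$, $2$ still requires exhibiting parameter values, as the paper does in its figures.
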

{\it Proof:} We divide our equilibrium solution into boundary and interior solution. We ignore all other quadrants because the population of the prey and predator cannot be negative. In order to find a boundary solution, we first assume that $u=0$ and $v \neq 0$. In the second case, we then assume that $v=0$ and $u \neq 0$. Finally, we assume that both $u=0$ and $v=0$ and try to plug it into our model. However, in the final case, the limit as $u$ and $v$ approaches zero does not exist. Nevertheless, from our numerical simulation, it is evident that $(0,0)$ is our equilibrium solution. So the boundary solution for our system is $(0,0), (1,0)$ and $(b,0)$. 

Likewise, the interior solution is the solution of the following system of nonlinear equation which is obtained by multiplying first equation from the model in equation (1) by c and adding two equations of the system to obtain one of the equations of v in terms of u. Similarly, we solve of v in terms of u purely from the second equation from the model in equation (1). The non-linear equation that represents $v$ in terms of $u$ is given below:
\begin{equation}\label{eq2}
v=\frac{u\Big(c-d\Big)}{d-\alpha u\Big(c-d\Big)}.
\end{equation}
$u$ is a positive root of a third order polynomial  $$\Big( \frac {ac}{d} u - b \Big) \Big(1-u \Big) \Big( d-\alpha u (c-d)\Big)=\Big(c-d\Big).$$
This non-linear system helps us solve for an interior solution (u,v) given values for our constant $a, b, c, d$.  In our research, we used the $fsolve$ function in MATLAB to solve for the system when the parameters $a, b, c, d$ are given. Our numerical results show that there are none, one or two interior solutions. 

\section{Stability Analysis}
In this section, we derive two variants of the Jacobian matrix $(J)$ for the model that is tuned to effectively aid in the stability analysis of each boundary and interior solutions. We then present both sufficient and necessary conditions for stability of equilibrium solution for our system. In general, the characteristic polynomial for a 2x2 system can be given as $\lambda ^2 -trace (J) \lambda + det (J)$, where, 
\begin{equation}
\begin{split}
\lambda=\frac{trace \pm \sqrt{trace^2- 4det(J)}}{2}
\end{split}
\end{equation}
is our eigenvalue. To guarantee stability of a solution, we must ensure that our eigenvalue is negative for real eigenvalues or ensure that the real part of the complex eigenvalue is negative for complex eigenvalues. This leads us to the following necessary and sufficient conditions to guarantee stability. 
\begin{itemize}
  \item $Trace (J)=J_{11}+J_{22}$ must be less than zero
  \item $Determinant (J)=J_{11}J_{22}-J_{12}J_{21}$ must be greater than zero
\end{itemize}

\subsection{Stability for Boundary Solutions}
\begin{theorem} The boundary solutions $(1,0)$ is always stable if $c<d$ and $(b,0)$ is always unstable if $b>0$ and $c<d$.
\end {theorem}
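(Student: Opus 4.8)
The plan is to linearize the system \eqref{eq1} about each boundary equilibrium on the $v=0$ axis and simply read off the signs of the eigenvalues of the Jacobian. Writing the shared functional-response term as $h(u,v)=\frac{uv(1+\alpha v)}{v+(1+\alpha v)u}$, the two right-hand sides become $f=au(u-b)(1-u)-h$ and $g=ch-dv$, so the Jacobian entries are $J_{11}=\frac{\partial}{\partial u}\big[au(u-b)(1-u)\big]-h_u$, $J_{12}=-h_v$, $J_{21}=c\,h_u$, and $J_{22}=c\,h_v-d$. The whole argument hinges on evaluating $h_u$ and $h_v$ on the axis $v=0$.

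First I would carefully compute those partials at $v=0$. Although $h$ looks singular near the origin, at a boundary point $(u^*,0)$ with $u^*>0$ the denominator equals $u^*>0$, so $h$ is smooth there and the derivatives are genuine two-sided derivatives. A direct quotient-rule computation yields the clean values $h_u\big|_{v=0}=0$ and $h_v\big|_{v=0}=1$. This is the crucial simplification: it forces $J_{21}=0$ and makes the Jacobian upper triangular,
\[
J\big|_{(u^*,0)}=\begin{pmatrix} p'(u^*) & -1 \\ 0 & c-d \end{pmatrix},\qquad p(u):=au(u-b)(1-u),
\]
so the eigenvalues are just the diagonal entries $\lambda_1=p'(u^*)$ and $\lambda_2=c-d$, and no discriminant analysis of the characteristic polynomial is required.

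Next I would evaluate $p'(u)=a\big[-3u^2+2(1+b)u-b\big]$ at the two equilibria. At $u=1$ this gives $\lambda_1=a(b-1)$, which is negative because $a>0$ and $b\in(-1,1)$; hence both eigenvalues at $(1,0)$ are negative precisely when $\lambda_2=c-d<0$, establishing asymptotic stability under the hypothesis $c<d$. At $u=b$ one obtains $\lambda_1=ab(1-b)$, which is strictly positive whenever $0<b<1$; a single positive eigenvalue already forces $(b,0)$ to be unstable, and the extra assumption $c<d$ (which makes $\lambda_2<0$) simply identifies it more precisely as a saddle point.

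The main obstacle I anticipate is the boundary computation of $h_u$ and $h_v$. One must first confirm that the rational functional response is actually differentiable at $(u^*,0)$ with $u^*>0$ (so that the formulas are legitimate derivatives rather than merely one-sided limits, avoiding the genuine singularity at the origin $(0,0)$ already noted in Theorem~2.1), and then carry the quotient rule through cleanly enough to see the cancellation that produces $h_u=0$ and $h_v=1$. Once that step is secured, the triangular structure of the Jacobian reduces the remainder of the proof to the elementary sign evaluations above.
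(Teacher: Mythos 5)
Your proof is correct and follows essentially the same route as the paper: linearize at $(1,0)$ and $(b,0)$, observe that the Jacobian there is upper triangular with diagonal entries $p'(u^*)$ and $c-d$, and conclude from the signs. The only cosmetic difference is that you read the eigenvalues directly off the diagonal (and carefully justify $h_u=0$, $h_v=1$ at $v=0$), whereas the paper reaches the same conclusion via the trace--determinant criterion.
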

\begin{proof}
We first compute the general form of Jacobian matrix for an equlibrium solution as follows:
\[J=
\begin{pmatrix}
a(-3u^2+2u+2bu-b)-\frac{v^2(1+ \alpha v)}{[u(1+ \alpha v)+v]^2} &   \frac{u(uv^2 \alpha^2+\alpha v^2+2 \alpha uv+u}{[u(1+\alpha v)+v]^2}
\\ \\
\frac{cv^2(1+ \alpha v)}{[u(1+ \alpha v)+v]^2} &   \frac{cu(uv^2 \alpha^2 + \alpha v^2+2 \alpha uv +u)}{[u(1+\alpha v)+v]^2}-d 
\end{pmatrix}
\]
Now, in order to analyze the stability for $(1,0)$, we substitute $u=1$ and $v=0$ in our general form of the Jacobian for boundary solution	. As a result, our Jacobian now simplifies to the following matrix:
\[J=
\begin{pmatrix}
a(b-1) &   -1\\
0 &   c-d
\end{pmatrix}
\]
Since we know that our sufficient and necessary criteria for stability, where $trace(J)<0$ and $det(J)>0$, we impose those conditions in our $J$. This gives us the following inequality:
\begin {equation}
\begin{split}
a(b-1)+c-d<0 \\
a(b-1)(c-d)>0
\end{split}
\end{equation}
From, algebraic analysis, it is evident that $(1,0)$ is always stable if $c<d$.
\\
Again, we compute the Jacobian for $(u,v)=(b,0)$, which simplifies to the following matrix:
\[J=
\begin{pmatrix}
ab(1-b) &   -1\\
0 &   c-d
\end{pmatrix}
\]
Similarly, we impose our criteria for stability, where $trace(J)<0$ and $det(J)>0$. We arrive at the following inequalities:
\begin {equation}
\begin{split}
ab(1-b)+c-d < 0 \\
ab(1-b)(c-d) > 0
\end{split}
\end{equation}
Again, through algebraic analysis of the inequality above, we conclude that $(b,0)$ is always unstable for $b>0$ and $c<d$.

\end{proof}

\subsection{Stability for Interior Solution}
We re-iterate our model but in a different form, which simplifies our analysis.
\begin {equation}
\begin{split}
\frac{du}{dt} &= \Big[\big(u-b\big)\big(1-u\big)-\frac{v\big(1+\alpha v\big)}{v+\big(1+\alpha v\big)}\Big]u = \Big[g_1\Big] u \\
\frac{dv}{dt} &= \Big[\frac{cu\big(1+\alpha v \big)}{v+\big(1+\alpha v \big)u}-d\Big]v = \Big[g_2 \Big] v
\end{split}
\end{equation}
such that,
\begin{equation}
g_1 = \Big[\big(u-b\big)\big(1-u\big)-\frac{v\big(1+\alpha v\big)}{v+\big(1+\alpha v\big)}\Big],  \hspace{1cm}
g_2 = \Big[\frac{cu\big(1+\alpha v \big)}{v+\big(1+\alpha v \big)u}-d\Big]
\end{equation}

\begin{theorem}
An interior solution is stable if $\frac{\partial g_1}{\partial u}<0$ for $c>0$
\end{theorem}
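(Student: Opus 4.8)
The plan is to exploit the factored form $\dot{u}=u\,g_1$, $\dot{v}=v\,g_2$. First I would observe that at an interior equilibrium $(u,v)$ with $u,v>0$ the factors $u$ and $v$ are nonzero, so necessarily $g_1=g_2=0$ there. Differentiating the two products by the product rule and evaluating at the equilibrium, every term carrying a factor of $g_1$ or $g_2$ vanishes, so the Jacobian collapses to
\[
J=\begin{pmatrix}
u\,\dfrac{\partial g_1}{\partial u} & u\,\dfrac{\partial g_1}{\partial v}\\
v\,\dfrac{\partial g_2}{\partial u} & v\,\dfrac{\partial g_2}{\partial v}
\end{pmatrix}.
\]
Since the prefactors $u,v$ are positive, the sign of each entry is dictated entirely by the corresponding partial derivative of $g_1$ or $g_2$.

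Next I would pin down the signs of the three partials other than $\frac{\partial g_1}{\partial u}$. Writing $D=v+(1+\alpha v)u$ for the common denominator of the functional response shared by $g_1$ and $g_2$, a direct quotient-rule computation, using the simplification $D-(1+\alpha v)u=v$, yields the clean forms
\[
\frac{\partial g_2}{\partial u}=\frac{c\,v(1+\alpha v)}{D^{2}},\qquad
\frac{\partial g_2}{\partial v}=-\frac{c\,u}{D^{2}},\qquad
\frac{\partial g_1}{\partial v}=-\frac{u(1+\alpha v)^{2}+\alpha v^{2}}{D^{2}}.
\]
For $c>0$ and any interior point these show $\frac{\partial g_2}{\partial u}>0$, $\frac{\partial g_2}{\partial v}<0$, and $\frac{\partial g_1}{\partial v}<0$, with the signs fixed independently of the remaining parameters.

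Finally I would assemble the two stability criteria. Because $\frac{\partial g_2}{\partial v}<0$ holds automatically for $c>0$, the hypothesis $\frac{\partial g_1}{\partial u}<0$ makes both diagonal entries negative, so $\mathrm{trace}(J)=u\frac{\partial g_1}{\partial u}+v\frac{\partial g_2}{\partial v}<0$. For the determinant, factoring out the positive $uv$ gives
\[
\det(J)=uv\left(\frac{\partial g_1}{\partial u}\frac{\partial g_2}{\partial v}-\frac{\partial g_1}{\partial v}\frac{\partial g_2}{\partial u}\right),
\]
where the first product is positive (negative times negative) and the second is negative (negative times positive), so the parenthesis is positive and $\det(J)>0$. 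Both the trace and determinant criteria stated above then hold, which establishes stability. I expect the main obstacle to be the sign analysis of the off-diagonal and $g_2$-derivatives: one must carry out the quotient-rule algebra carefully enough to expose the cancellations that force $\frac{\partial g_1}{\partial v}<0$, $\frac{\partial g_2}{\partial u}>0$, and $\frac{\partial g_2}{\partial v}<0$ for all admissible parameters. Once those signs are secured, the hypothesis $\frac{\partial g_1}{\partial u}<0$ serves only to fix the sign of $J_{11}$, and the predator-prey sign structure of the coupling does the rest.
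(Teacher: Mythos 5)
Your proposal is correct and follows the same route as the paper's proof: both reduce stability to the trace and determinant conditions on the factored Jacobian $J_{ij}$ and argue entrywise signs. The difference is that you actually carry out the quotient-rule computations, obtaining $\frac{\partial g_2}{\partial u}=\frac{cv(1+\alpha v)}{D^2}>0$, $\frac{\partial g_2}{\partial v}=-\frac{cu}{D^2}<0$, and $\frac{\partial g_1}{\partial v}=-\frac{u(1+\alpha v)^2+\alpha v^2}{D^2}<0$, whereas the paper merely asserts the signs without computation. Your explicit work in fact corrects the paper at one point: the paper claims $J_{12}>0$ and $J_{21}<0$, while the correct predator--prey sign structure is the one you derive, $J_{12}=u\frac{\partial g_1}{\partial v}<0$ and $J_{21}=v\frac{\partial g_2}{\partial u}>0$; the product $J_{12}J_{21}$ is negative either way, so the determinant conclusion survives, but your version is the one that withstands scrutiny. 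You also correctly use the full denominator $D=v+(1+\alpha v)u$ from the original model rather than the typographically garbled denominator appearing in the paper's displayed $g_1$. In short, same strategy, but yours supplies the verification the paper omits.
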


We now define our Jacobian for interior solution as follows:
\[J=
\begin{pmatrix} 
u\frac{\partial g_{1}}{\partial u} & u \frac{\partial g_{1}}{\partial v} \\  
\\
v\frac{\partial g_{2}}{\partial u} & v\frac{\partial g_{2}}{\partial v} 
\end{pmatrix}
\]
\begin{proof}
Assume,  $\frac{\partial g_1}{\partial u}<0$. Then, $J_{11}$ and $J_{22}$ is negative for positive $c$. This implies, the trace $J_{11} + J_{22} < 0 $. Similarly, $J_{12}$ is positive and $J_{21}$ is negative, and their product $J_{12}J_{21}$ is negative. This implies, the determinant, $J_{11}J_{22}-J_{12}J_{21}$ is positive. Both conditions are satisfied for stability. Hence, $\frac{\partial g_1}{\partial u}<0$ is a sufficient condition for stability of interior equilibrium.
\end{proof}

%
%
\section{Numerical Results}
In this section, we present our results from numerical experiments performed in MATLAB. 
\begin{figure}[H]%
\centering
\subfloat[\scriptsize{Stable Node, $\alpha =0.92$}]{{\includegraphics[width=.27\textwidth]{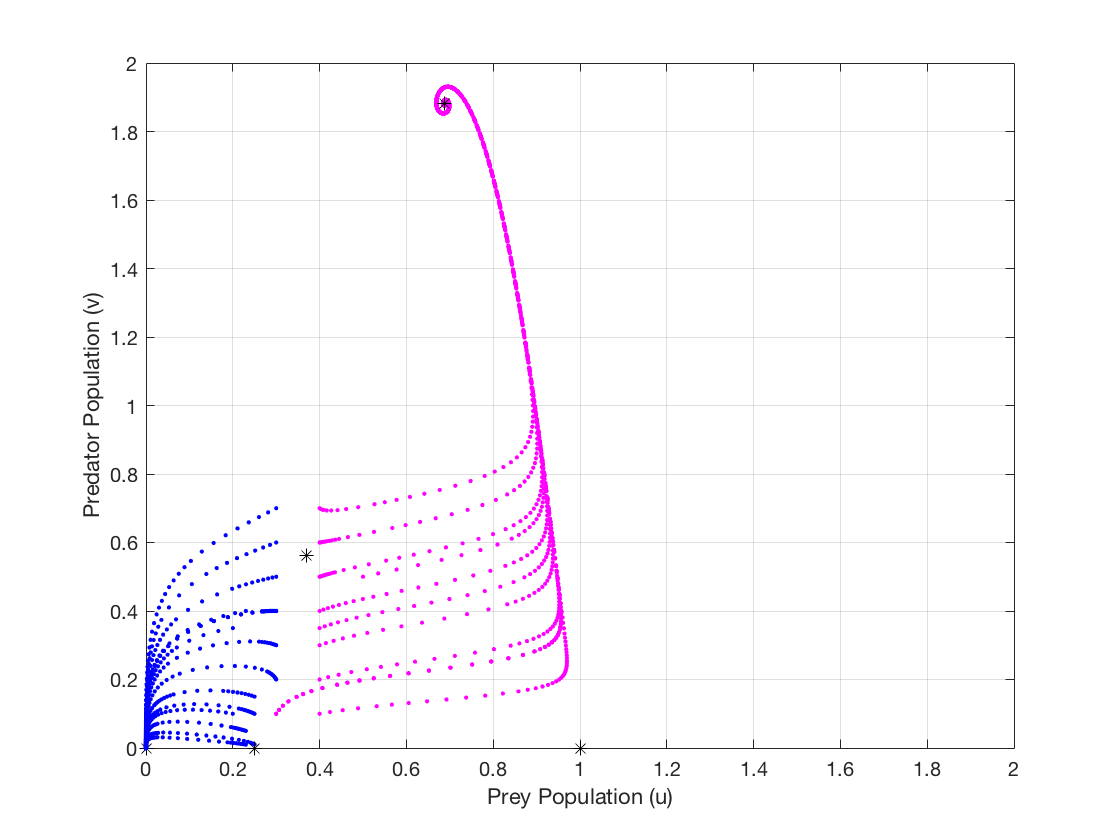} }}%
\qquad
\subfloat[\scriptsize{Unstable Node, $\alpha =0.96$}]{{\includegraphics[width=.27\textwidth]{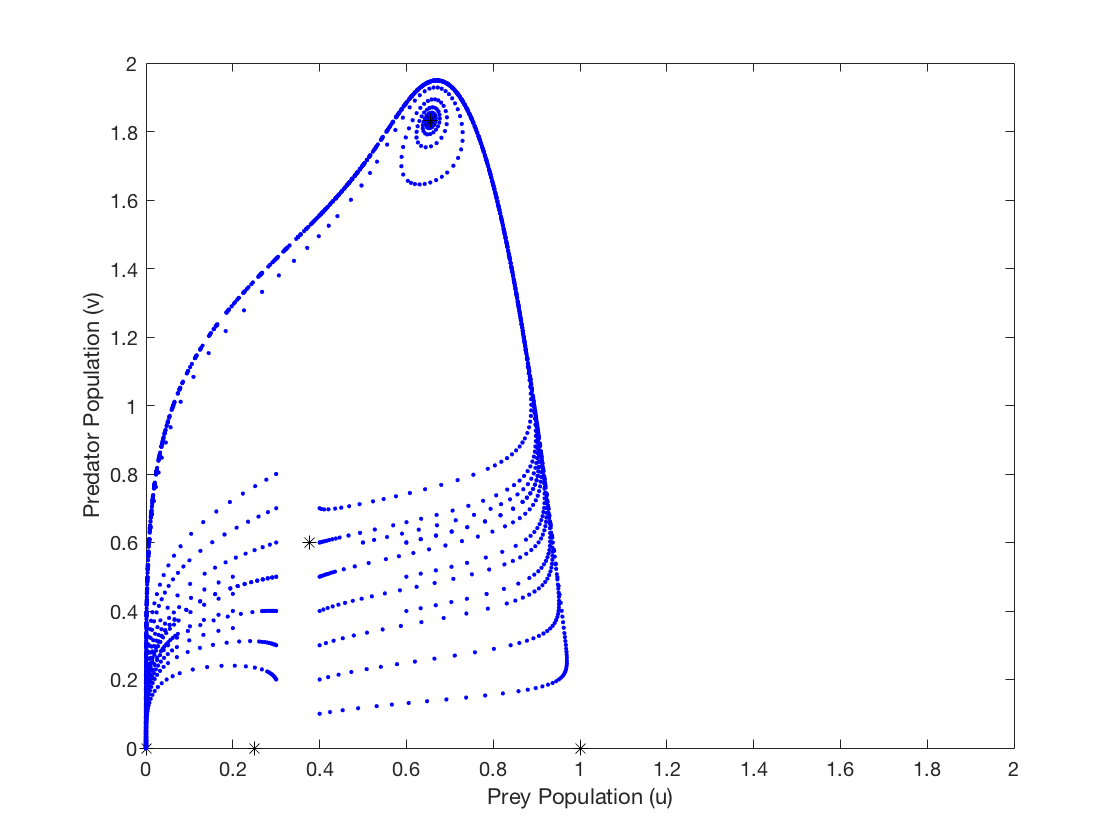} }}%
\caption{\footnotesize{Numerical simulation showing change in stability of equilibrium coexistence from $\alpha =0.92$ to  $\alpha =0.96$, given $a=10, b=0.25, c=2, d=1$.}}
\label{fig:ex}%
\end{figure}

\begin{figure}[H]%
\centering
\subfloat[\scriptsize{$b=-0.25$}]{{\includegraphics[width=.18\textwidth]{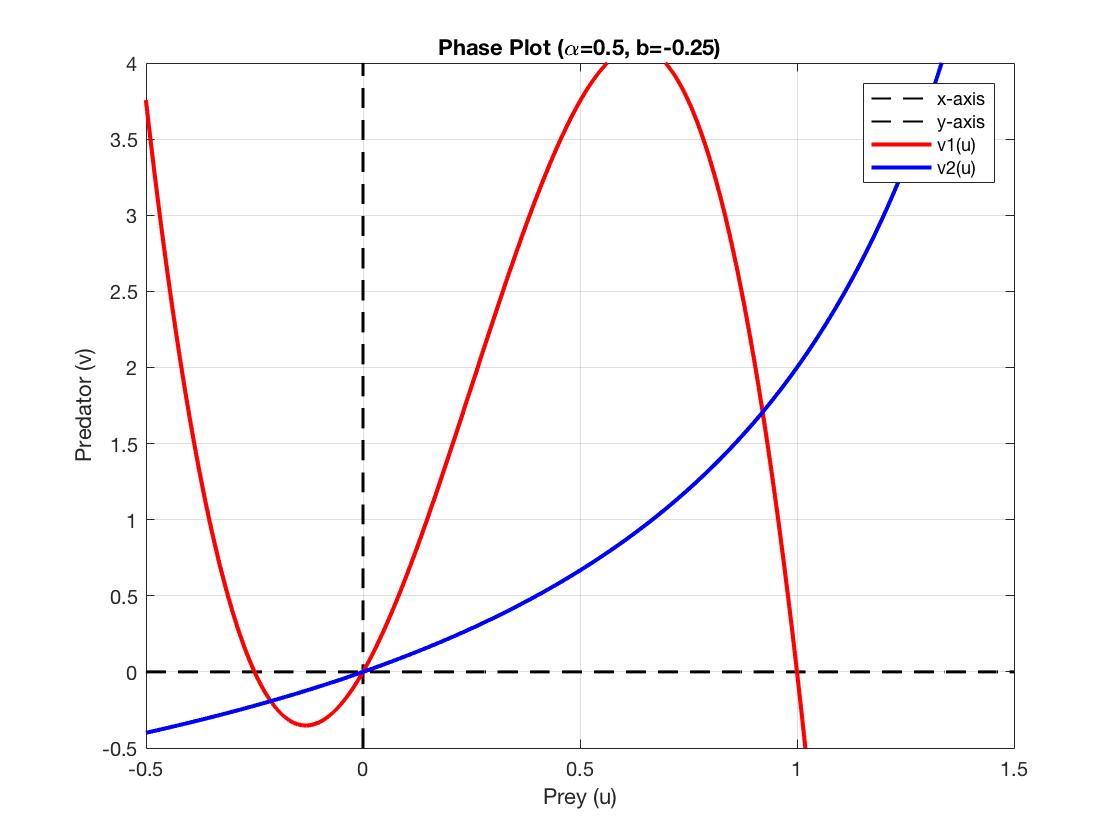} }}%
\subfloat[\scriptsize{$b=0$}]{{\includegraphics[width=.18\textwidth]{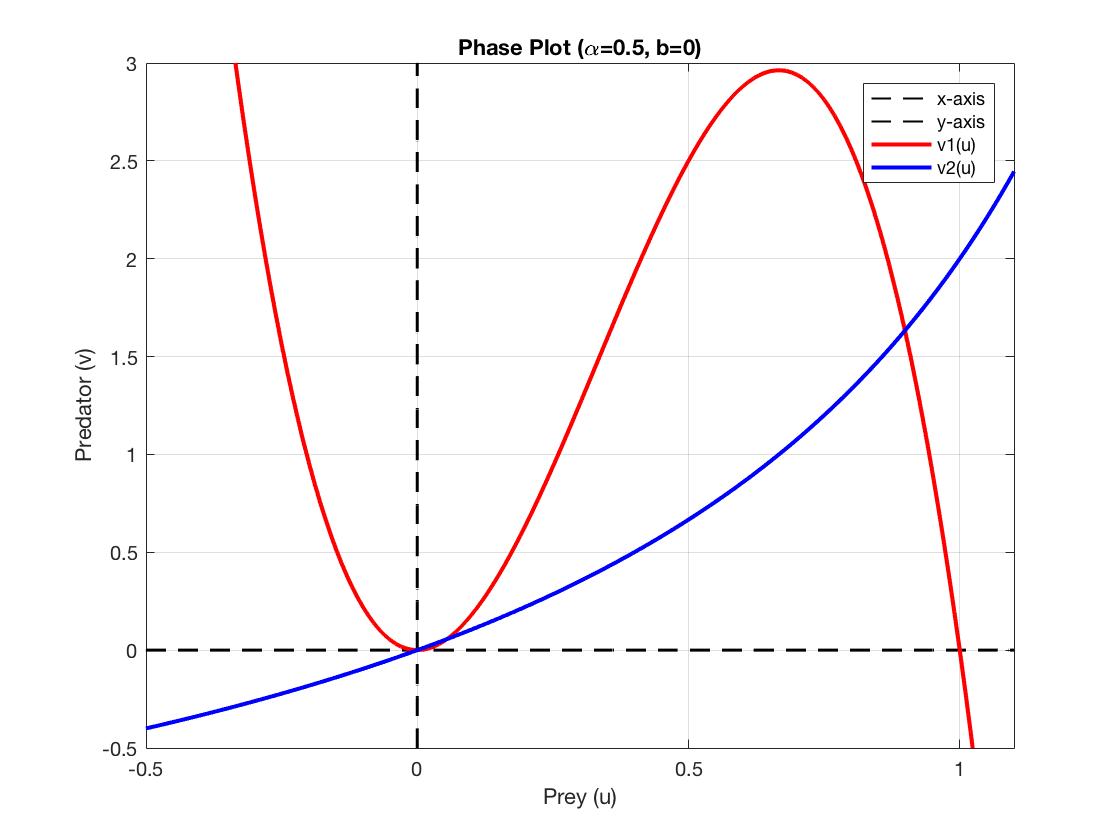} }}%
\subfloat[\scriptsize{$b=0.25$}]{{\includegraphics[width=.18\textwidth]{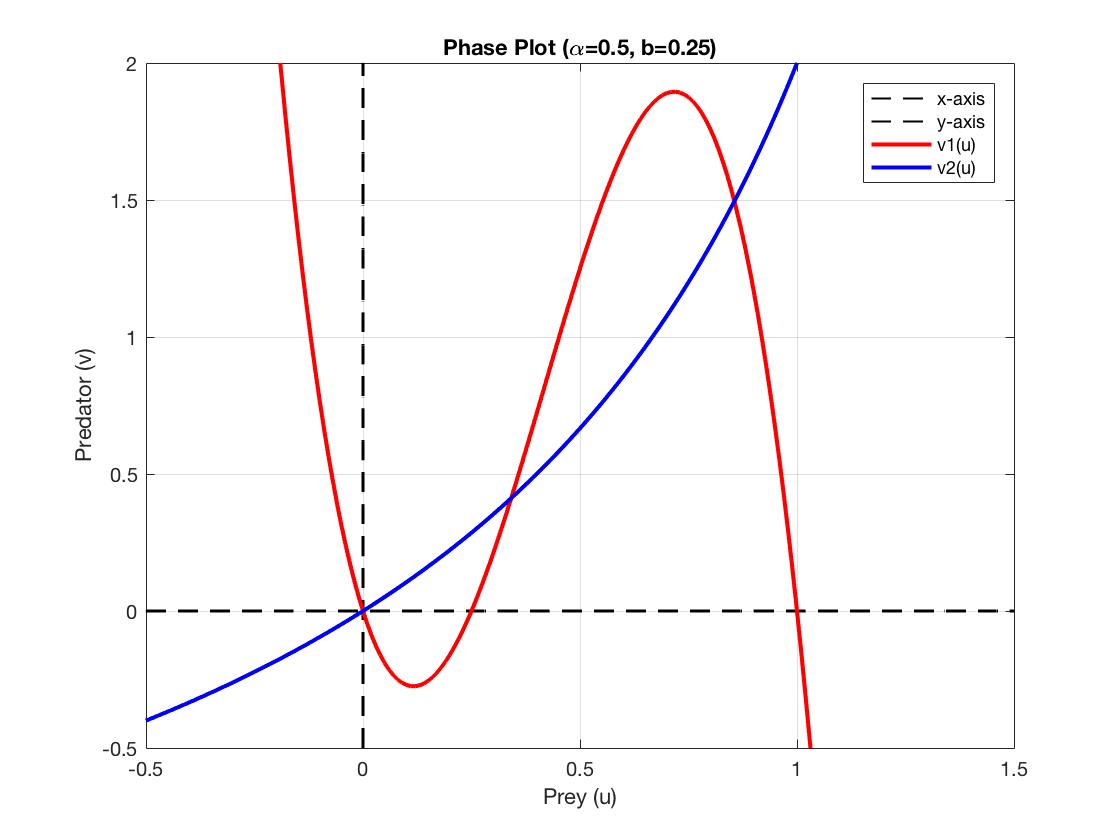} }}%
\subfloat[\scriptsize{$b=0.75$}]{{\includegraphics[width=.18\textwidth]{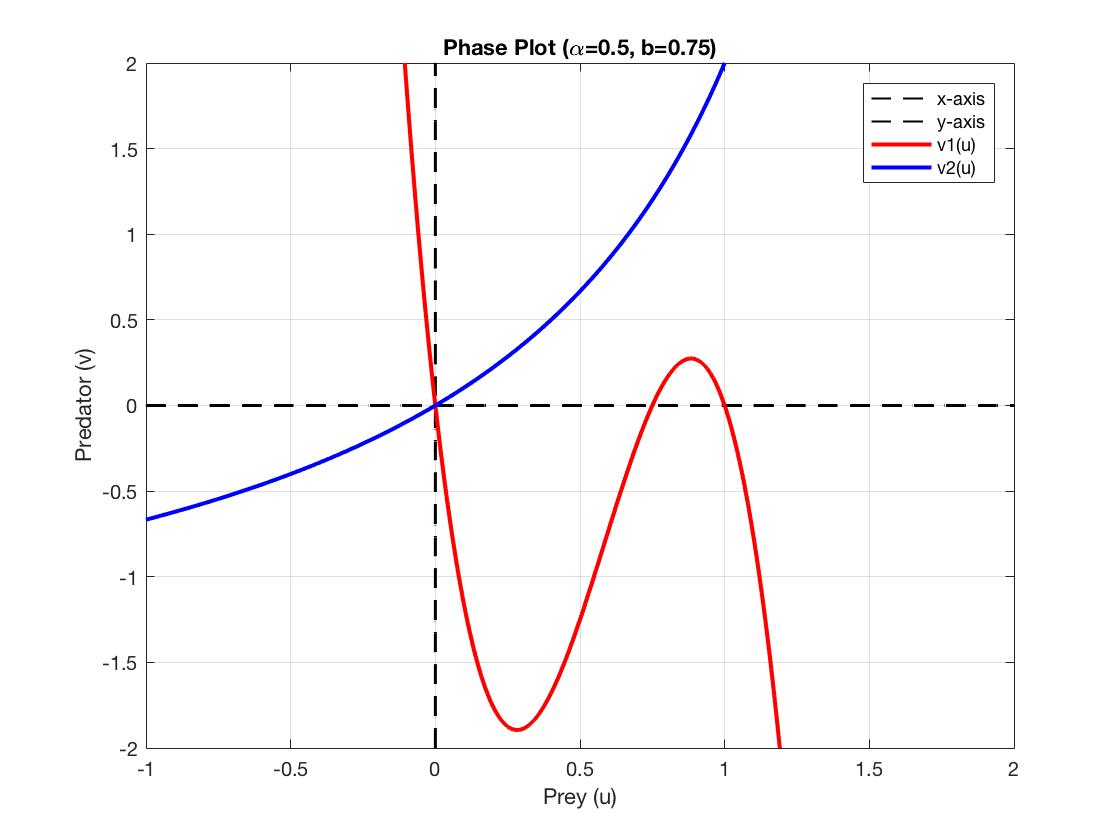} }}%
\caption{\footnotesize{Number of interior equilibrium solutions could be none, one and two. Case I: $\alpha=0.5$. (a) Intersection:$(0.9209, 0.9209)$; (b) Intersection:$(0.0544, 0.0544)$, $(0.8990, 1.6330)$; (c) Intersection:$(0.3146, 0.4119)$, $(0.8557, 1.4957)$; (d) Intersection: None; }}%
\label{}%
\end{figure}

\begin{figure}[H]%
\centering
\subfloat[\scriptsize{$b=-0.25$}]{{\includegraphics[width=.20\textwidth]{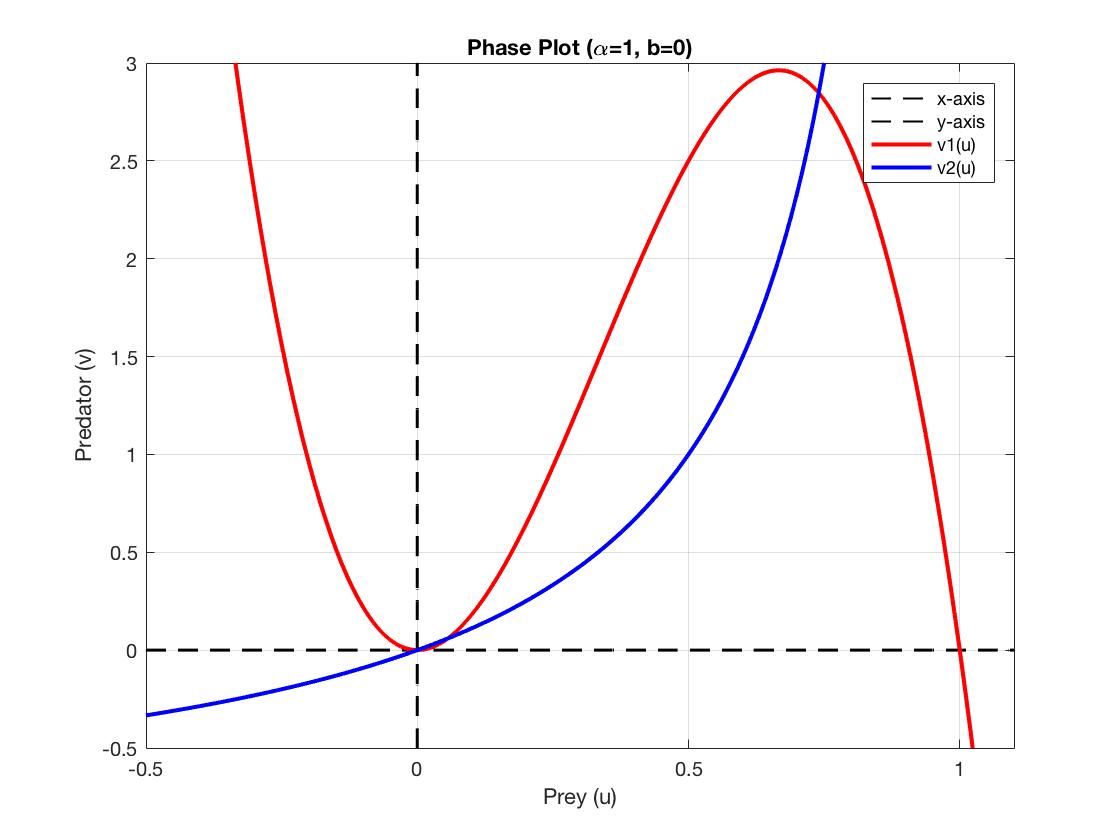} }}%
\subfloat[\scriptsize{$b=0$}]{{\includegraphics[width=.20\textwidth]{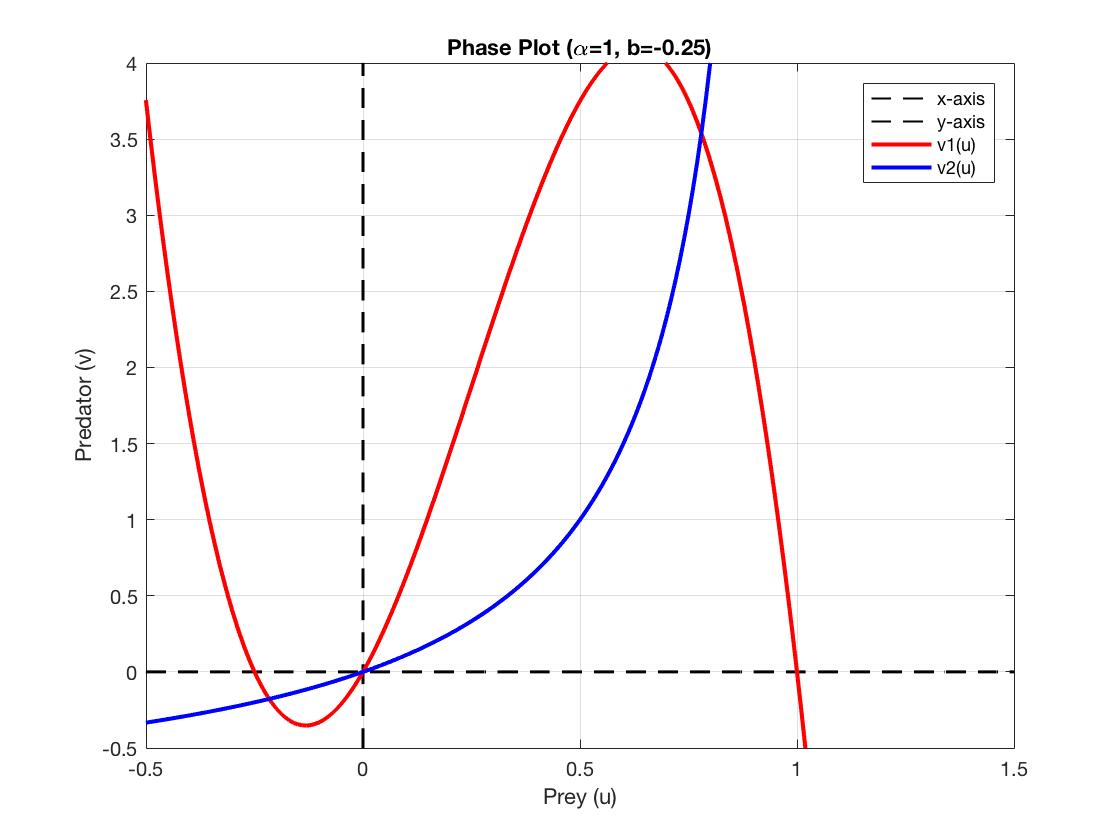} }}%
\subfloat[\scriptsize{$b=0.23$}]{{\includegraphics[width=.20\textwidth]{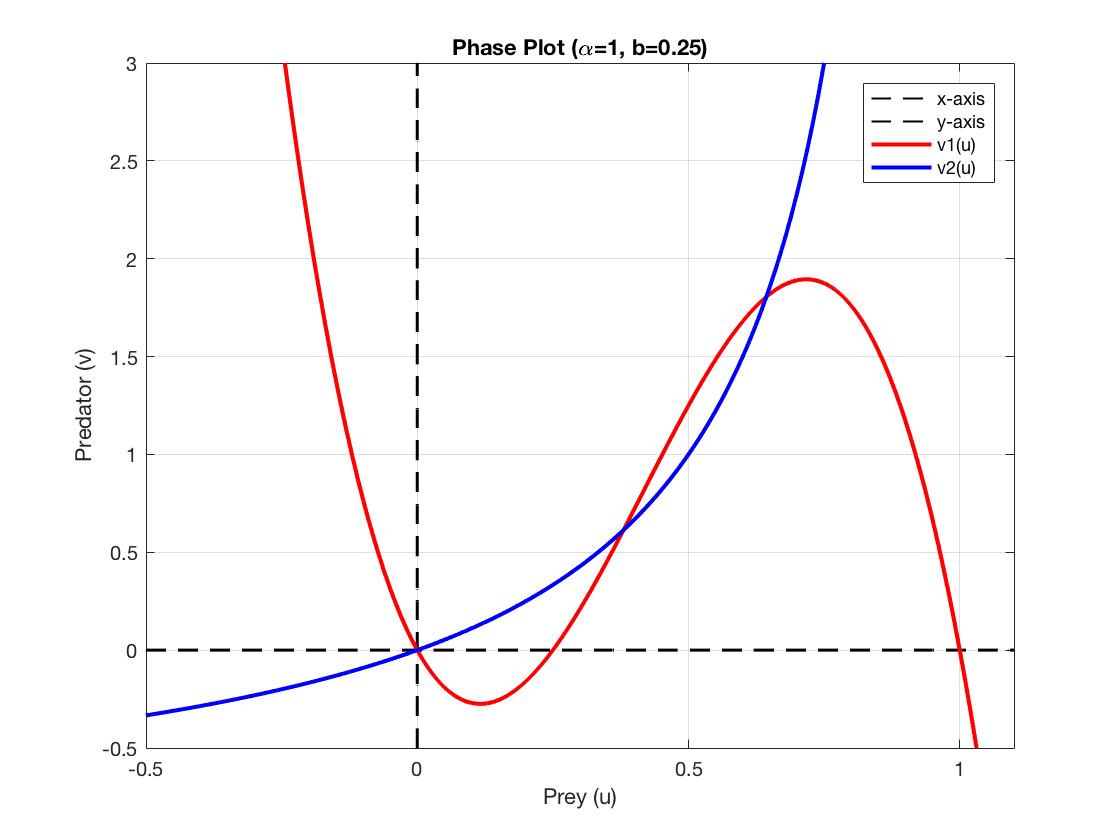} }}%
\subfloat[\scriptsize{$b=0.75$}]{{\includegraphics[width=.20\textwidth]{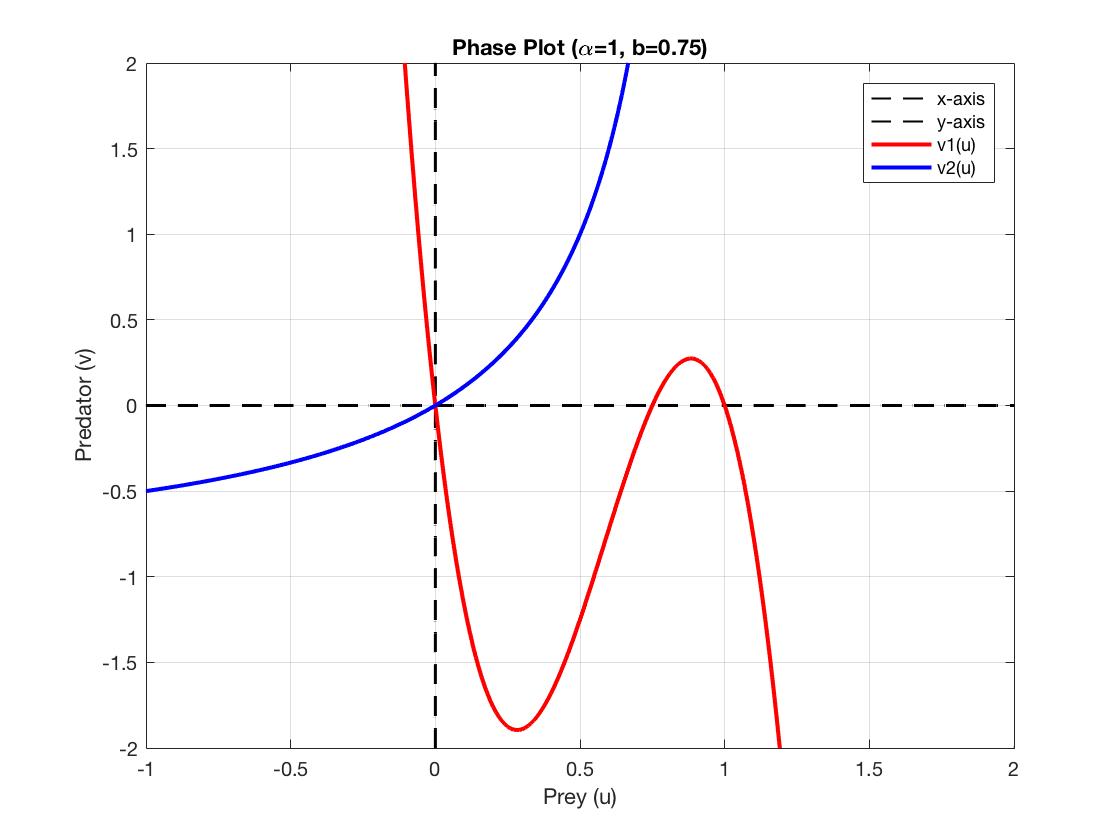} }}%
\caption{\footnotesize{
Case II: $\alpha=1$. (a) Intersection: $(0.7796, 3.5379)$; (b) Intersection:$(0.0595, 0.0595)$, $(0.7401, 2.8473)$; (c) Intersection:$(0.3801, 0.0.6132)$, $(0.6436, 1.8056)$; (d) Intersection: None;}}%
\label{fig:example}%
\end{figure}

\section{Conclusion}
In our research, two types of bifurcation was noticed. The first type of bifurcation occurred where there was a change in the number of interior stable points as shown in Figure 2 and Figure 3. The second type bifurcation occurred when there was a change in the stability of an existing interior solution from stable to unstable as shown in Figure 1. Although, increasing the hunting cooperation may seem to increase the chance of catching a prey to increase survival of the predator, it can be deduced that higher value for hunting cooperation is actually detrimental leading to extinction of both species in the ecosystem.

\section{Future Research}
For our future research, we want to conduct a two parameter analysis that involves at least hunting cooperation with any other variable. With comprehensive numerical simulation, it may be suggestive if Allee Effect is actually induced by Hunting Cooperation.

\section{Acknowledgement}
I would like to extend my sincere thanks to my advisors Dr. Zhifu Xie and Dr. Huiqing Zhu for their constant mentoring during the seven-weeks REU 2018 research. I am very grateful to Wright W. and Annie Rea Cross Endowment Scholarship for financial support, MAA-NREUP and School of Mathematics and Natural Sciences at USM for research experience for undergraduates.

\end{document}